\documentclass{elsarticle}

\usepackage{amsmath}
\usepackage{amssymb}
\usepackage[psamsfonts]{eucal}
\usepackage{fancyhdr}

\parskip .45em 
\parindent 0em 

\newtheorem{teo}{\sc Theorem}

\newdefinition{defi}{\sc Definition}
\newdefinition{rem}{\sc Remark}

\newdefinition{example}{Example}
\newproof{proof}{Proof}

\begin{document}

\begin{frontmatter}

\author[Branquinho]{Am\'\i lcar Branquinho}
\fnref{label2}
\ead{ajplb@mat.uc.pt}

\author[Foulquie Moreno]{Ana Foulqui\'e Moreno}
\corref{cor1} \cortext[cor1]{Corresponding author}
\ead{foulquie@ua.pt}

\author[Mendes]{Ana Mendes}
\fnref{label3}
\ead{aimendes@ipleiria.pt}

\address[Branquinho]{CMUC and Department of Mathematics, University of Coimbra, Apartado 3008, EC Santa Cruz, 3001-501 Coimbra, Portugal.}

\address[Foulquie Moreno]{CIDMA and  Department of Mathematics, University of Aveiro, 3810-193 Aveiro, Portugal.}

\address[Mendes]{Department of Mathematics, School of Technology and Management, Polytechnic Institute of Leiria,  2411-901  Leiria, Portugal.}

\fntext[label2,cor1,label3]{This work was supported by Center for Mathematics of the University of Coimbra, Center for Research and Development in Mathematics and Applications (University of Aveiro), and School of Technology and Management, Polytechnic Institute of Leiria, funded by the European Regional Development Fund through the program COMPETE and by the Portuguese Government through the FCT--Funda\c c\~ao para a Ci\^encia e a Tecnologia under the projects PEst-C/MAT/UI0324/2011 and PEst-C/MAT/UI4106/2011 with COMPETE number FCOMP-01-0124-FEDER-022690.}

\title{Matrix Orthogonal Polynomial in the theory of Full Kostant-Toda Systems}

\begin{abstract}
In this work we characterize a full Kostant-Toda system in terms of a family of matrix polynomials orthogonal with respect to a complex matrix measure.
In order to study the solution of this dynamical system we give explicit expressions for the Weyl function and we also obtain, under some conditions, a representation of the vector of linear functionals associated with this system.
\end{abstract}

\begin{keyword} 
Matrix orthogonal polynomials \sep linear functional \sep recurrence relation \sep operator theory \sep matrix Sylvester differential equations \sep full Kostant-Toda systems.
\MSC 33C45 \sep 39B42 \sep 47N20 \sep 34K99 \sep 42C05.
\end{keyword}
\end{frontmatter}

\section{Introduction} \label{sec:1}

Consider the following infinite system of differential equations
\begin{eqnarray} \label{sisteqdif}
                  \begin{cases}
                   \dot{a}_n=c_n-c_{n-2} \\
                    \dot{b}_n=c_n a_{n+1}-c_{n-1}a_n+d_n-d_{n-2} \\
                    \dot{c}_n=c_n(b_{n+1}-b_n)+d_{n}a_{n+2}-d_{n-1}a_n \\
                    \dot{d}_n=d_n(b_{n+2}-b_n)
                  \end{cases} , \ n \in {\mathbb{N}} \, ,
\end{eqnarray}
where the dot, ``$ \ \dot{} \ $'', means the differentiation with respect to $t \in \mathbb{R}$ and where we assume that $a_0=b_0=c_0=d_0=0$ and $c_1=0$.

Particular cases of this kind of dynamical system appear in the literature in different contexts. 
For example, in~\cite{Bogoyavlenskii1} and~\cite{Bogoyavlenskii2}, Bogoyavlenskii gave a classification of these dynamical systems which are a discrete generalization of a KdV equation and showed that such systems have interesting applications on Hamilton mechanics.
On the other hand in the work~\cite{Aptekarev} a particular case of a Bogoyavlenskii discrete dynamical system appears related with the study of spectral problems for higher order difference equations and it was studied using a method based on the analysis of the genetic sums formula for the moments of the associated operator. Also in~\cite{Sorokin} the authors studied another particular case of these dynamical systems investigating the spectral properties of the associated band operator.
More recently, in~\cite{Geng}, the authors propose to study systems of type~\eqref{sisteqdif} motivated by its bi-Hamiltonian structure, and the first two authors, studied the interpretation of some generalizations of the systems considered in the works mentioned before (cf.~\cite{doloresamilcarana2},~\cite{doloresamilcarana}, and~\cite{doloresamilcarana3}).

The system of equations~\eqref{sisteqdif} can be written as matrix Sylvester equation known as a {\it Lax pair}, $$\dot{J}=[J,J_{-}]=J\,J_{-}-J_{-}\,J \, , $$ where $J$ and $J_{-}$ are the operators which matrix representation is given respectively by
\begin{equation}\label{eq:J}
J=\left[
  \begin{matrix}
    b_1 & a_2 & 1 &  &  &  &   \\
    c_1 & b_2 & a_3 & 1 &  &  &     \\
    d_1 & c_2 & b_3& a_4 & 1 &  &      \\
     & d_2 & c_3& b_4 & a_5 & 1 &    \\
     &  & \ddots & \ddots & \ddots & \ddots & \ddots  \\
     \end{matrix}
\right] \, , \ \ J_{-}=\left[
  \begin{matrix}
    0 &  &  &  &    \\
    c_1 & 0 &  &    &    \\
    d_1 & c_2 & 0 &   &      \\
     & d_2 & c_3& 0  &    \\
     &   & \ddots & \ddots   &  \ddots \\
     \end{matrix}
\right].
\end{equation}

When $J$ is a bounded operator, then it is possible to define the {\it resolvent ope\-ra\-tor}, by
$$(zI -J)^{-1}=\sum_{n=0}^\infty \frac{
J^n}{z^{n+1}} \, , \ \ |z|>||J|| \, , $$
(see for example~\cite{Beckermann} and~\cite{Berezanskii}) and, the associated analytic function
\begin{equation}\label{weyl}
R_J(z)= \sum_{n=0}^\infty \frac{e_0^T
J^n e_0}{z^{n+1}} \, , \ \ |z|>||J|| \, ,
\end{equation} 
where $e_0= \left[
\begin{matrix}
I_{2\times 2}&  0_{2\times 2} & \cdots
\end{matrix}
\right]^T, $ known as the {\it Weyl function} associated with~$J$.

If we denote by $M_{ij}$ the $2 \times 2$ block matrices, of an infinite matrix M, formed by the entries of rows
$2i-1$, $2i$ and columns $2j-1$, $2j$, the matrix~$J^n$ can be written by blocks as
\begin{equation}\label{blockmatrix}
J^n=\left[
  \begin{matrix}
    J^n_{11} & J^n_{12} & \cdots   \\
    J^n_{11}  & J^n_{11}& \cdots    \\
    \vdots & \vdots & \ddots  \end{matrix}
\right].
\end{equation}
In this way, for each $n \in {\mathbb{N}}$, we have that the Weyl function~\eqref{weyl} can also be written in the following form
\begin{equation}\label{eq:J11}
R_J(z)= \sum_{n=0}^\infty \frac{J^n_{11}}{z^{n+1}} \, , \ \ |z|>||J|| \, .
\end{equation}

As a consequence of the Lax pair representation for~\eqref{sisteqdif} we observe that the operator theory establishes a connection between these systems and the theory of approximation. In fact, with the Lax pair representation for~\eqref{sisteqdif} we can associate to this system the Weyl function of $J$, $R_J$. On the other hand, we will see that the Weyl function of~$J$ and the complex measure of orthogonality, given by the generalized Markov function associated with the systems of matrix orthogonal polynomials defined by~$J$, are similar. Using the theory of matrix orthogonal polynomials we can obtain a representation of $R_J$ that gives a solution of the system~\eqref{sisteqdif}.

In that sense, we start by considering, for $d_{n-1} \neq 0$, $n=2, 3, \ldots$, the sequence of monic polynomials $\{p_n\}
$ satisfying the five term recurrence relation
\begin{eqnarray} \label{eq:100}
& &x^2 p_n=p_{n+2}+a_{n+2} p_{n+1}+b_{n+1} p_{n}+c_n p_{n-1}+d_{n-1} p_{n-2}\, , \ \ n \geq 2 \nonumber \\
& &p_{1}(x)=x-a_1 \, , \ \ a_1 \in {\mathbb{R}} \, , \ \ p_{-1}(x)=0 \, , \ \ p_{0}(x)=0 \, .
\end{eqnarray}

Notice that~\eqref{eq:100} can be written in matrix form
\begin{multline*} x^2 \left[
\begin{matrix}
   p_{2m} \\
   p_{2m+1}
\end{matrix}\right]=
\left[
\begin{matrix}
  1 & 0 \\
  a_{2m+3} & 1
\end{matrix}%
\right] \left[
\begin{matrix}
   p_{2m+2} \\
      p_{2m+3}
\end{matrix}\right]
 \\
+ \left[%
\begin{matrix}
  b_{2m+1} & a_{2m+2}\\
  c_{2m+1} & b_{2m+2}
\end{matrix}%
\right] \left[
\begin{matrix}
   p_{2m} \\
      p_{2m+1}
    \end{matrix}\right]
+\left[
\begin{matrix}
  d_{2m-1} & c_{2m}\\
  0 & d_{2m}
\end{matrix}
\right] \left[\begin{matrix}
   p_{2m-2} \\
      p_{2m-1}
\end{matrix}\right]
\,.
\end{multline*}
which can be read as a three term recurrence relation 
 \begin{equation}\label{eq:300}
 x^2 {{\mathcal{B}}}_m(x) = A_{m} {{\mathcal{B}}}_{m+1}(x) + B_m
{{\mathcal{B}}}_m(x) + C_m {{\mathcal{B}}}_{m-1}(x) \, , \ \ m \geq 1 \, ,
 \end{equation}
 where
${{\mathcal{B}}}_m = \left[\begin{matrix}
                                           p_{2m} & p_{2m+1}
                                                         \end{matrix}
                                                         \right]^T$,
\begin{eqnarray*} A_m =  \left[
\begin{matrix}
  1 & 0 \\
  a_{2m+3} & 1
\end{matrix}%
\right] \, , \ \  B_m = \left[
\begin{matrix}
  b_{2m+1} & a_{2m+2}\\
  c_{2m+1} & b_{2m+2}
\end{matrix}%
\right] \, , \ \ C_m = \left[
\begin{matrix}
  d_{2m-1} & c_{2m}\\
  0 & d_{2m}
\end{matrix}%
\right] \, ,
\end{eqnarray*}
for $m \geq 1$, with ${{\mathcal{B}}}_{-1}(x)=0_{2 \times 1}$ and ${{\mathcal{B}}}_{0}(x)=\left[
\begin{matrix}
1 & x - a_1
\end{matrix}
\right]^T $.

It was proved in~\cite{anamendes} that we can always write $\mathcal{B}_{m}$ in the matrix form
 \begin{eqnarray*}
\mathcal{B}_{m}(x)=V_{m}(x^2)\mathcal{P}_{0}(x),
\end{eqnarray*}
where $ V_{m}$ is a~$2\times 2$ matrix polynomial of degree $m$, $\mathcal{P}_{0}(x) = \left[
\begin{matrix}
1 & x
\end{matrix}
\right]^T$, and that $\{V_m\}
$ 
is defined by
\begin{eqnarray} \label{rrrV}
x  V_m(x) = A_{m+1} V_{m+1}(x)+ B_m V_m(x)+ C_mV_{m-1}(x) \, , \ \ m\geq 1 \, ,
\end{eqnarray}
with $V_{-1} = 0_{2 \times 2}$ and 
$V_0 = \left[  \begin{matrix}
              1 & 0 \\
              -a_1 & 1 
            \end{matrix}
          \right]
$.

From this it can be seen that $\{ V_m \}$ is a sequence of matrix polynomials, orthogonal with respect to a complex matrix measure that can be written in terms of the Weyl function $R_J$.  In the next section we shall see this statement in more detail.

Since, the matrix operator $J$ can also be written in terms of a block tridiagonal matrix of the form
\begin{equation} \label{J}
 J=\left[%
\begin{matrix}
  B_0 & A_0 & 0_{2 \times 2} &  \\
  C_1 & B_1 & A_1 & \ddots \\
  0_{2 \times 2} & C_2 & B_2 & \ddots \\
    & \ddots & \ddots& \ddots
\end{matrix}%
\right],
\end{equation}
then it is related to the matrix sequence of polynomials $\{ V_m \}
$ through the recurrence relation~\eqref{rrrV}. This block matrix, from now on, is said to be the $2 \times 2$ block
Jacobi matrix associated with the above matrix polynomial sequences.

Notice that the polynomials $p_n$ and $V_{m}$ depend on $t \in {\mathbb{R}}$, as well as the coefficients $a_n , b_n , c_n , d_n$ of the recurrence relations. For each~$t$,~$\{V_m \}
$ forms a matrix sequence of orthogonal polynomials. For sake of simplicity in the following we suppress the $t$-dependence.

One of our goals is to study the solutions of~\eqref{sisteqdif} in terms of the operator~$J$ and its asso\-cia\-ted to matrix polynomials $V_{m}(x)$, but first we should establish some known results about vector and matrix orthogonality.

At this point it is worth mentioning that~\eqref{sisteqdif} has a matrix interpretation in terms of the matrix coefficients $A_m$, $B_m$ and $C_m$ that appears in the recurrence relation~\eqref{eq:300}, i.e.
\begin{equation*} 
\begin{cases}
     \dot{A}_m=A_mD_{m+1}-D_mA_m\\
     \dot{B}_m=A_mC_{m+1}-C_mA_{m-1}+B_mD_m-D_mB_m\\
     \dot{C}_m=B_m C_m-C_mB_{m-1}+C_m D_{m-1}-D_mC_m
  \end{cases} \, , \ \ m=0,1, \ldots \, ,
\end{equation*}
with
$$D_m=\left[
        \begin{matrix}
          0 & 0 \\
          c_{2m+1} & 0 \\
        \end{matrix}
      \right] \, , \ \ m = 0 , 1 \ldots \, .
$$
 Also, these matrix coefficients contain the solution $\{a_n,b_n,c_n,d_n\}$, $n \in {\mathbb{N}}$ of the system~\eqref{sisteqdif} and, using the matrix orthogonality, we explicitly get the representations for them (cf. section~\ref{sec:2}).

In section~\ref{sec:2}, we present some known results about the vector and matrix or\-tho\-go\-na\-li\-ty.

In section~\ref{sec:3}, we study the solution of the dynamical system~\eqref{sisteqdif}. We show that the Weyl function associated to $J$ play a main role in the solution of this problem.

Finally, in section~\ref{sec:4}, we give explicit expressions for the Weyl function and we also obtain, under some conditions, a representation of the vector functionals associated with the system studied in section~\ref{sec:3}.

\section{Connection with vector orthogonality} \label{sec:2}

Let ${\mathbb{P}}$ be the linear space of polynomials with complex coefficients. Now, con\-si\-der the space of vector of polynomials ${\mathbb{P}}^2= \langle {\mathcal{P}}_j, \,
j \in {\mathbb{N}}\rangle $, where ${\mathcal{P}}_j=x^{2j}{\mathcal{P}}_0$ with ${\mathcal{P}}_0=\left[
\begin{matrix}
1 & x
\end{matrix}
\right]^T $,
and the space $ \mathcal{M} _{2\times 2}(\mathbb{C})$ of $2 \times 2$-matrices with complex entries. It is well known (see~\cite{anamendes}) that there exist a {\it vector of linear functionals} $\, {\mathcal{U}}= \left[
\begin{matrix}
u^1 & u^2
\end{matrix}
\right]^T$ defined in $(\mathbb{P}^{2})^*$, the linear space of vector linear functionals, here called {\it dual space}, acting in $ \mathbb{P}^{2}$
over~$ \mathcal{M} _{2\times 2}(\mathbb{C})$~such that
 \begin{eqnarray*}
\mathcal{U}(\mathcal{P}):= (\mathcal{U}.\mathcal{P} ^{T} )
^{T}=\left[
\begin{matrix}
\langle u^1, p_1 \rangle  & \langle u^2, p_1 \rangle  \\
\langle u^1, p_2 \rangle  & \langle u^2, p_2 \rangle
\end{matrix}
\right] \, ,
 \end{eqnarray*}
where ``$ . $'' means the symbolic product of the vectors
$ \mathcal{U}$ and $
\mathcal{P}^{T}$, where ${\mathcal{P}}^T= \left[
\begin{matrix}
p_1 & p_2
\end{matrix}
\right]^T$, $p_1 , p_2 \in {\mathbb{P}}$.
Notice that this definition is already known in a context of a vectorial interpretation of the multiple orthogonality (cf.~\cite{cotrimamilcar}).

It is easy to verify that ${\mathcal{U}}$ is linear, i.e., ${\mathcal{U}}$ satisfies ${\mathcal{U}}(A\,\mathcal{P}+B\,\mathcal{Q})=A\,\mathcal{U}(\mathcal{P})+B\,\mathcal{U}(\mathcal{Q})$ for $A$,
$B$ numerical matrices and $\mathcal{P}$, $\mathcal{Q}$ vector of polynomials and if $\displaystyle \widehat{A}(x) = \sum_{k=0}^{l}A_{k}\, x^{k}$ is a matrix polynomial we
can define the  \emph{left multiplication of $ \, \mathcal{U}$ by $\widehat{A}$}, denoted  by $\widehat{A}\, \mathcal{U}$,
as the vector of linear functionals such that
 \begin{equation*} 
 (\widehat{A} \, \mathcal{U} )(\mathcal{P} ):= (
  \widehat{A}\,\mathcal{U} . \mathcal{P}
^{T} )^{T}=\sum_{k=0}^{l} (x^{k}\,\mathcal{U} )( \mathcal{P}) \,
(A_{k} )^{T} \, .
\end{equation*}

The {\it Hankel matrices} associated with $\mathcal{U}$ are the matrices
 \begin{eqnarray*} 
 U_m =\left[%
 \begin{matrix}
  {\mathcal{U}}_0 & \cdots &  {\mathcal{U}}_m  \\
  \vdots & \ddots & \vdots \\
   {\mathcal{U}}_m & \cdots &  {\mathcal{U}}_{2m}
 \end{matrix}%
 \right] \, ,\ \ m \in {\mathbb{N}} \, ,
 \end{eqnarray*}
where ${\mathcal{U}}_j$ is the  $j$-th moment associated with the vector of linear functionals~${\mathcal{U}}$, i.e., ${\mathcal{U}}_j={\mathcal{U}}(x^{2j}{\mathcal{P}}_0)$.
${\mathcal{U}}$ is said to be \emph{quasi-definite} if all the leading principal submatrices of $U_m \, , \ m \in {\mathbb{N}}$, are non-singular.

A {\it vector sequence of polynomials} $\{{\mathcal{B}}_m\}
$, with degree of ${\mathcal{B}}_m$ equal to $m$, is 
{\it left-orthogonal} with respect to the vector of linear functionals $\mathcal{U}$~if
\begin{equation} \label{cotra1}
(x^{2k} {\mathcal{U}}) \left( {\mathcal{B}}_m
\right)=\Delta_m \delta_{k,m} \, , \ \  k=0,1,\ldots \, , \ m \, , \ \ m \in
{\mathbb{N}} \, ,
\end{equation}
with $\Delta_m$ a non-singular $2\times 2$ upper triangular matrix given~by
$$\Delta_m =C_m \,\cdots \,C_1 \, \Delta_0 \, ,
\ \ m \geq 1 \, , $$ where $\Delta_0$ is a~$2 \times 2$ non-singular matrix and $\{ C_m \}
$ is a sequence of non-singular upper triangular matrices.

Similarly, a {\it sequence of matrix polynomials} $\{G_m\}
$, with degree of $G_m$ equal to $m$, is 
{\it right-orthogonal} with respect to the vector of linear functionals~${\mathcal{U}}$ if it is {\it bi-orthogonal} with the vector sequence of polynomials $ \{ \mathcal{B}_{m} \}
$ to the vector of linear functionals~${\mathcal{U}}$, i.e., if
 \begin{eqnarray*}
 (   (  G_{n} (  x^2 )   ) ^{T}\mathcal{U} )
 (  \mathcal{B}_{m} )  = I_{2\times 2} \, \delta_{n,m} \, , \ \ n,m \in \mathbb{N} \, .
\end{eqnarray*}

In~\cite{anamendes}, necessary and sufficient conditions for the quasi-definiteness of ${\mathcal{U}}$, i.e., for the existence of a vector sequence of polynomials left-orthogonal with respect to the vector of linear functionals $\mathcal{U}$ were obtained.

These sequences of polynomials satisfy non-symmetric three term recurrence relations. So, if $\{{\mathcal{B}}_m\}
$ is a vector sequence of polynomials left-orthogo\-nal with respect to~${\mathcal{U}}$ 
where $\mathcal{B}_{m}(x)=V_{m}(x^2)\mathcal{P}_{0}(x)$, with $\mathcal{P}_{0}(x) = \left[
\begin{matrix}
1 & x
\end{matrix}
\right]^T$,
then there exist sequences of numerical matrices $ \{ A_m \}
$, $ \{ B_m \}
$, and $\{ C_m \}
$, with $A_m$ a non-singular lower triangular matrix and $C_m$ a non-singular upper triangular matrix, such that $\{ \mathcal{B}_m \}$ is defined by~\eqref{eq:300}
with
$ {\mathcal{B}}_{-1}(x) = 0_{1 \times 2}$ and $ {\mathcal{B}}_{0}(x)=M  {\mathcal{P}}_0(x) \, $,  for a fixed matrix, $M$. Moreover,
$\{ V_m \}$ is defined by~\eqref{rrrV}
with $V_{-1} = 0_{2 \times 2}$ and $V_0 = M $, and $\{ G_n \}$ is defined by
\begin{eqnarray*} \label{rrrG}
x  G_n (x) = G_{n+1}(x) C_{n+1} + G_n (x) B_n + G_{n-1}(x) A_{n-1} \, , \ \ n \geq 1 \, ,
\end{eqnarray*}
with $G_{-1} = 0_{2 \times 2}$ and $G_0 = \mathcal{U} \left( \mathcal{B}_0 \right)$.

These three term recurrence relation completely characterizes each type of orthogonality, i.e., there exist a Favard type theorem for each of these cases. Moreover, the coefficients of the three term recurrence relation can be expressed in terms of vector of linear functionals $\mathcal{U}$ or in terms of matrix measure associated with $\{V_m\}
$ or $\{G_n\}
$ (cf.~\cite{anamendes}).

Furthermore, {\it left and right vector orthogonality} is connected with {\it right and left matrix orthogonality} as will see bellow. So,  if we consider the formal series
$$
\frac{1}{z-x^2} = \sum_{n=0}^\infty
\frac{x^{2n}}{z^{n+1}} \, , \ \  |x^2|<|z| \, , 
$$ 
 the {\it generalized Markov matrix function}, ${\mathcal{F}}$, associated with~${\mathcal{U}}$  is defined~by
\begin{eqnarray*} 
{\mathcal{F}}(z) :=
\sum_{n=0}^\infty \frac{ \left( x^{2n} {\mathcal{U}} \right) \left( {\mathcal{P}}_0(x) \right) }{z^{n+1}}  
=
\left[
 \begin{matrix}
 \displaystyle \sum_{n=0}^\infty \frac{\langle  u^1 , x^{2n} \rangle}{z^{n+1}}  &  \displaystyle \sum_{n=0}^\infty \frac{ \langle u^2 , x^{2n} \rangle}{z^{n+1}}  \\ 
 \displaystyle \sum_{n=0}^\infty \frac{\langle  u^1 , x^{2n+1}\rangle }{z^{n+1}} & \displaystyle \sum_{n=0}^\infty \frac{\langle u^2 , x^{2n+1} \rangle }{z^{n+1}}
\end{matrix}%
\right],
\end{eqnarray*}
with $z$ such that $|x^2|<|z|$ for every  $x \in {\sf L}$ where
$  {\sf L} =   \cup_{j=1,2} \, \operatorname{supp} \, u^j \, $, and ${\mathcal{P}}_0(x) = \left[
\begin{matrix}
1 & x
\end{matrix}
\right]^T$.
\begin{teo}[{\rm cf.~\cite{anamendes}}]The matrix sequence $ \{ G_{n}\}
$ and the vector sequence of polynomials  $ \{ \mathcal{B}_{m} \}
$ are bi-orthogonal with respect to $\mathcal{U}$ if, and only if, the sequence of matrix orthogonal polynomials $ \{ G_{n} \}
$ and $ \{ V_{m} \}
$ are bi-orthogonal with respect to $\mathcal{F}$, i.e.,
 \begin{eqnarray*}
\frac{1}{2\pi i}\int_{C}V_{m} (  z )  \mathcal{F} ( z )  G_{n}
( z )  dz= I_{2 \times 2} \, \delta _{n,m} \, , \ \ n,m\in \mathbb{N} \, ,
 \end{eqnarray*}
where $C$ is a closed path in $\{z \in {\mathbb{C}}: |z| > |x^2|, x \in {\sf L} \}$ where is defined as before by
$  {\sf L} = \cup_{j=1,2} \, \operatorname{supp} \, u^j \, $.
\end{teo}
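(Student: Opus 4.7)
The plan is to prove that both bi-orthogonality relations reduce to the same double sum, by expanding $V_m$ and $G_n$ in powers of $z$ (respectively $x^2$) and evaluating both sides explicitly.

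First I would write $V_m(z)=\sum_{j=0}^{m}V_m^{(j)}z^{j}$ and $G_n(z)=\sum_{l=0}^{n}G_n^{(l)}z^{l}$, plug the series definition of $\mathcal{F}$ into the contour integral, and use Cauchy's theorem on a contour $C$ enclosing all singularities (which is legitimate because the series for $\mathcal{F}(z)$ converges for $|z|$ large enough). Only the coefficient of $z^{-1}$ survives, giving
\begin{equation*}
\frac{1}{2\pi i}\int_{C}V_{m}(z)\mathcal{F}(z)G_{n}(z)\,dz
=\sum_{j,l}V_m^{(j)}\,\bigl(x^{2(j+l)}\mathcal{U}\bigr)(\mathcal{P}_0)\,G_n^{(l)}.
\end{equation*}

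Next I would unfold the left-hand side $((G_n(x^2))^{T}\mathcal{U})(\mathcal{B}_m)$ of the vector bi-orthogonality condition. Since $G_n(x^2)^T=\sum_{l}(G_n^{(l)})^{T}x^{2l}$, the definition of the left multiplication of $\mathcal{U}$ by a matrix polynomial gives
\begin{equation*}
\bigl((G_n(x^2))^{T}\mathcal{U}\bigr)(\mathcal{B}_m)
=\sum_{l}\bigl(x^{2l}\mathcal{U}\bigr)(\mathcal{B}_m)\,G_n^{(l)}.
\end{equation*}
Then I use $\mathcal{B}_m(x)=V_m(x^2)\mathcal{P}_0(x)=\sum_{j}V_m^{(j)}x^{2j}\mathcal{P}_0(x)$ together with the linearity property $\mathcal{U}(A\mathcal{P}+B\mathcal{Q})=A\,\mathcal{U}(\mathcal{P})+B\,\mathcal{U}(\mathcal{Q})$ to pull $V_m^{(j)}$ out on the left, which yields precisely $\sum_{j,l}V_m^{(j)}\,(x^{2(j+l)}\mathcal{U})(\mathcal{P}_0)\,G_n^{(l)}$ — the same expression as the contour integral.

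Combining both computations establishes the identity
\begin{equation*}
\frac{1}{2\pi i}\int_{C}V_{m}(z)\mathcal{F}(z)G_{n}(z)\,dz
=\bigl((G_n(x^2))^{T}\mathcal{U}\bigr)(\mathcal{B}_m),
\end{equation*}
valid for all $m,n\in\mathbb{N}$, from which the claimed equivalence of the two bi-orthogonality conditions is immediate. I expect the only genuine difficulty to be bookkeeping: keeping the transposes in the left-multiplication formula aligned with the convention $\mathcal{U}(\mathcal{P})=(\mathcal{U}.\mathcal{P}^{T})^{T}$ so that the $2\times 2$ blocks in both computations appear in the same order, and justifying termwise integration of the $\mathcal{F}$-series on $C$ (which follows from uniform convergence of $\mathcal{F}$ outside $\{|z|\leq\max_{x\in{\sf L}}|x^2|\}$ together with the fact that $V_m$ and $G_n$ are polynomials).
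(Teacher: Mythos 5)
Your argument is correct: the paper itself gives no proof of this theorem (it is imported from the cited reference), and your residue computation together with the expansion of the left-multiplication definition is the standard way to establish it. Both sides do reduce to the double sum $\sum_{j,l}V_m^{(j)}\,\mathcal{U}_{j+l}\,G_n^{(l)}$ with the matrix factors in the same order, so the equivalence is immediate; the only step you use tacitly is the shift property $(x^{2l}\mathcal{U})(x^{2j}\mathcal{P}_0)=(x^{2(j+l)}\mathcal{U})(\mathcal{P}_0)$, which is how the moments are defined, and the termwise integration you justify by uniform convergence of the Laurent series of $\mathcal{F}$ on the compact contour $C$ is fine.
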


The sequences of matrix polynomials $\{V_m\}
$ and $\{G_m\}
$ presented here are orthogonal with respect to the complex matrix measure of orthogonality~$\mathcal{F}$. We note that~$\mathcal{F}$ is a complex matrix measure of orthogonality (cf.~\cite{Cberg}) which is not necessarily positive definite as in the orthornormal case considered in~(cf.~\cite{Dur93},~\cite{Dur96}) and that can be determined by a Markov type theorem as the reader can see in~\cite{anamendes2}. Moreover, the matrix sequence of polynomials $\{V_m\}
$ satisfy the three term recurrence relation~\eqref{rrrV},
with
\begin{eqnarray*}
A_m&=&\frac{1}{2\pi i}\,\int_{C}\, zV_{m} (  z ) \, \mathcal{F} ( z ) \, G_{m+1}
( z )  \, dz \, ,\\
B_m&=&\frac{1}{2\pi i}\,\int_{C}zV_{m} (  z )\,  \mathcal{F} ( z )\,  G_{m}
( z ) \, dz \, ,\\
C_m&=&\frac{1}{2\pi i}\,\int_{C}zV_{m} (  z ) \, \mathcal{F} ( z ) \, G_{m-1}
( z ) \, dz \, .
\end{eqnarray*}

For our purposes in this work we need the following definition:
\begin{defi}
Let $ \, {\mathcal{U}}$ be a vector of linear functionals. We denote by
$\widehat{{\mathcal{U}}}$, 
the {\it nor\-ma\-li\-zed vector of linear functionals}
associated with ${\mathcal{U}}$, i.e.,
$$ \widehat{{\mathcal{U}}}=(({\mathcal{U}}({\mathcal{P}}_0))^{-1})^T {\mathcal{U}} \, ,
\mbox{ where } \ \  {\mathcal{P}}_0(x) = \left[
\begin{matrix}
1 & x
\end{matrix}
\right]^T \, . 
$$
\end{defi}

Furthermore, from this definition we have
\begin{equation*}
\widehat{{\mathcal{U}}}({\mathcal{P}}_0) = ((({\mathcal{U}}({\mathcal{P}}_0))^{-1})^T
{\mathcal{U}})({\mathcal{P}}_0) = {\mathcal{U}}({\mathcal{P}}_0)({\mathcal{U}}({\mathcal{P}}_0))^{-1} = I_{2 \times 2}.
\end{equation*}

From now on and in the next section we consider a normalized vector of linear functionals (we set ${\mathcal{U}}={\widehat{\mathcal{U}}}$).


The next theorem shows how the generalized Markov function is directly related with the Weyl function associated with the block tridiagonal Jacobi matrix~\eqref{J}.
\begin{teo}\label{teo2} 
Let ${\mathcal{U}}$ be a normalized vector of linear functionals, ${\mathcal{F}}$ the generalized Markov function, and $R_J$ the Weyl function associated with the Jacobi block matrix, $J$, defined by~\eqref{J}. Then, we have that
$$R_J(z)=M\,{\mathcal{F}}(z)\,M^{-1} \, , $$
where $M=\left[
            \begin{matrix}
              1 & 0 \\
              -a_1 & 1 
            \end{matrix}
          \right]
\, $.
\end{teo}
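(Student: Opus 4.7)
The strategy is to prove the moment-by-moment identity $J^n_{11}=M\,\mathcal{U}_n\,M^{-1}$ for every $n\geq 0$, where $\mathcal{U}_n:=(x^{2n}\mathcal{U})(\mathcal{P}_0)$, and then sum both sides divided by $z^{n+1}$ to recombine~\eqref{eq:J11} and the defining series of $\mathcal{F}$. The key observation is that the three-term recurrence~\eqref{eq:300} for $\mathcal{B}_m(x)=V_m(x^2)\mathcal{P}_0(x)$ translates, after cancelling $\mathcal{P}_0(x)$ and writing $y=x^2$, into a block eigenvalue-type identity $J\,\vec{V}(y)=y\,\vec{V}(y)$, with $\vec{V}(y)=(V_0(y),V_1(y),V_2(y),\ldots)^T$ and $V_0=M$; the boundary row $m=0$ and the generic rows $m\geq 1$ of~\eqref{eq:300} match, block for block, the structure of $J$ displayed in~\eqref{J}. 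Iterating gives $J^n\,\vec{V}(y)=y^n\,\vec{V}(y)$, and reading off the first block yields
\begin{equation*}
y^n\,M \;=\; \sum_{k\geq 0} J^n_{1,k+1}\,V_k(y), \qquad n\in\mathbb{N}.
\end{equation*}

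Next, restoring $y=x^2$, I would multiply this identity on the right by $\mathcal{P}_0(x)$, so that $V_k(x^2)\mathcal{P}_0(x)=\mathcal{B}_k(x)$, and then apply $\mathcal{U}$. On the right-hand side the left-orthogonality~\eqref{cotra1} with $k=0$ reads $\mathcal{U}(\mathcal{B}_m)=\Delta_0\,\delta_{m,0}$, so only the $k=0$ summand survives; moreover $\Delta_0=\mathcal{U}(\mathcal{B}_0)=\mathcal{U}(M\,\mathcal{P}_0)=M\,\mathcal{U}(\mathcal{P}_0)=M$ by the normalization $\mathcal{U}(\mathcal{P}_0)=I_{2\times 2}$. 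On the left-hand side, linearity of $\mathcal{U}$ under left multiplication by a constant matrix gives $\mathcal{U}(x^{2n}\,M\,\mathcal{P}_0)=M\,\mathcal{U}_n$. Combining the two and right-multiplying by $M^{-1}$ produces $J^n_{11}=M\,\mathcal{U}_n\,M^{-1}$; summation over $n$ weighted by $z^{-(n+1)}$ then yields the claimed identity $R_J(z)=M\,\mathcal{F}(z)\,M^{-1}$ throughout the region $|z|>\|J\|$ where both series converge.

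I expect the main subtlety to be in the first step, namely aligning the block indexing of $J$ in~\eqref{J} with the recurrence~\eqref{eq:300} so as to justify cleanly the eigenvalue-type identity $J\,\vec{V}(y)=y\,\vec{V}(y)$ (and in particular verifying the boundary case $m=0$, where $V_{-1}=0$ ensures that no $C_0$ term appears). Once that is in place, what remains is a bookkeeping exercise combining linearity of $\mathcal{U}$, the left-orthogonality relation~\eqref{cotra1}, and the identity $\Delta_0=M$, which is precisely what forces the conjugation by $M$ appearing in the statement of the theorem.
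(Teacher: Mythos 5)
Your proposal is correct and follows essentially the same route as the paper: the paper also starts from the eigenvalue-type identity $J\,[\mathcal{B}_0,\mathcal{B}_1,\ldots]^T=x^2[\mathcal{B}_0,\mathcal{B}_1,\ldots]^T$, iterates it, applies $\mathcal{U}$ to the first block row so that left-orthogonality kills every term except the one with $e_0^TJ^ne_0=J^n_{11}$, and then uses $\mathcal{B}_0=M\mathcal{P}_0$ together with the normalization $\mathcal{U}(\mathcal{P}_0)=I_{2\times 2}$ before summing the series. Your detour through $\vec V(y)$ and the subsequent right-multiplication by $\mathcal{P}_0(x)$ is only a cosmetic variant of working with the $\mathcal{B}_m$ directly, so there is nothing substantive to flag.
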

\begin{proof}
Let $\{ \mathcal{B}_m \}$ be a vector sequence of polynomials left-orthogonal with respect to $\mathcal{U}$.
To determine the value of $(e_0^T J^n e_0),n \in \, {\mathbb{N}}$ we consider the following matricial identity
$$J\left[%
\begin{matrix}
  {{\mathcal{B}}}_0(x) \\
  \vdots \\
  {{\mathcal{B}}}_m(x)\\
  \vdots 
\end{matrix}
\right]=x^2\left[%
\begin{matrix}
  {{\mathcal{B}}}_0(x) \\
  \vdots \\
  {{\mathcal{B}}}_m(x)\\
  \vdots 
\end{matrix}\right] \, , $$
from which we obtain
\begin{eqnarray} \label{fran}J^n\left[%
\begin{matrix}
  {{\mathcal{B}}}_0(x) \\
  \vdots \\
  {{\mathcal{B}}}_m(x)\\
  \vdots 
\end{matrix}
\right]=x^{2n}\left[%
\begin{matrix}
  {{\mathcal{B}}}_0(x) \\
  \vdots \\
  {{\mathcal{B}}}_m(x)\\
  \vdots \\
\end{matrix}\right],\, m \in {\mathbb{N}} \, .
\end{eqnarray}
Hence, from the first equation of the relation~\eqref{fran}
we have that
$$e_0^T J^n e_0 \, {{\mathcal{B}}}_0 +\cdots =x^{2n}
{{\mathcal{B}}}_0 \, .$$

Applying the vector of linear functionals ${\mathcal{U}}$ to the last relation and considering the or\-tho\-go\-na\-li\-ty conditions we have that
$$e_0^T J^n e_0 \, {\mathcal{U}} ({{\mathcal{B}}}_0) =(x^{2n}
{\mathcal{U}} )({{\mathcal{B}}}_0) \, , $$ i.e., $$e_0^T
J^n e_0=(x^{2n}
{\mathcal{U}})({{\mathcal{B}}}_0)({\mathcal{U}} ({{\mathcal{B}}}_0))^{-1} \, . $$

But, from the initial conditions of the three term recurrence relation~\eqref{eq:300} we have
${{\mathcal{B}}}_0= M{\mathcal{P}}_0$, then we obtain
$$e_0^T J^n e_0= M(x^{2n}
{\mathcal{U}})({\mathcal{P}}_0)({\mathcal{U}}({\mathcal{P}}_0))^{-1}M^{-1} \, . $$
Therefore,
$$R_J (z)= M\sum_{n=0}^\infty \frac{(x^{2n}
{\mathcal{U}})({\mathcal{P}}_0)({\mathcal{U}}({\mathcal{P}}_0))^{-1}}{z^{n+1}}M^{-1} \, . $$
Since we take $ \, {\mathcal{U}}$ a normalized vector functional we get the desired representation for
$R_J$.
\end{proof}


\section{Main Result} \label{sec:3}

To establish the relation between the solutions of an integrable system and the vector of polynomials ${\mathcal{B}}_{m}$
we have to recall that in this case $\mathcal{U}=\mathcal{U}(t)$ depends on $t$ and then, it is possible to define the derivative of ${\mathcal{U}}$ (cf.~\cite{Vilenkin}) as usual:
$$\frac{d\,\mathcal{U}}{dt}:{\mathbb{P}}^2\rightarrow \mathcal{M}_{2 \times 2}(\mathbb{C})$$
such that, for each $\mathcal{P} \in {\mathbb{P}}^2  $,
$$\frac{d\,{\mathcal{U}}}{dt}(\mathcal{P})=\lim_{\Delta t \rightarrow 0}\frac{{\mathcal{U}}\{t+\Delta t\}(\mathcal{P})-{\mathcal{U}}\{t\}(\mathcal{P})}{\Delta t} \, . $$

Obviously, the usual properties for this kind of operators are verified. In particular,
\begin{equation}\label{derivadat}
\frac{d}{dt}({\mathcal{U}}(\mathcal{P}))=\frac{d{\mathcal{U}}}{dt}(\mathcal{P})+{\mathcal{U}}(\dot{\mathcal{P}}), \,\,
\forall \, {\mathcal{P}}\in {\mathbb{P}}^2 \, .
\end{equation}

Now, we establish and prove our main result about the relation between the solutions of an integrable system and the matrix polynomials, $\{ V_m \}$, orthogonal with respect to the generalized Markov function, $\mathcal{F}$, associated with $J$ by the previous theorem:
\begin{teo}\label{teo:7}
Assume that the sequence $\{a_n,b_n,c_n,d_n\},\,\, n \in {\mathbb{N}}$, is uniformly bounded, i.e., $\exists \, K \in {\mathbb{R}}_+$ such that $\operatorname{max}\{|a_n(t)| , |b_n(t)| , |c_n(t)| , |d_n(t)| \} \leq M$ for all $n \in {\mathbb{N}}$ and $t \in {\mathbb{R}}$. Also, consider that $\dot{a}_1=c_1$. Then, the following conditions are equivalent:
\begin{itemize}
\item[(a)] $\{a_n,b_n,c_n,d_n\},\,\, n \in {\mathbb{N}}$, is a solution of~\eqref{sisteqdif}, this is,
\begin{equation}\label{eq:Jlax}
\dot{J}=JJ_{-}-J_{-}J \, .
\end{equation}
\item[(b)] For each $n \in {\mathbb{N}}\cup \{0\}$ we have that
\begin{equation}\label{eq:dif}
\frac{d}{dt}J^n_{11}=J^{n+1}_{11}-J^n_{11}J_{11}+J^n_{11}(J_{-})_{11}-(J_{-})_{11}J^n_{11} \, .
\end{equation}
\item[(c)] For $n \in {\mathbb{N}}$ we have that
\begin{equation}\label{momentos}
\dot{{\mathcal{U}}}_n={\mathcal{U}}_{n+1} - {\mathcal{U}}_n \, {\mathcal{U}}_1 \, .
\end{equation}
\item[(d)] For all $z\in{}\mathbb{C}$ such that $|z|>||J||$,
\begin{equation}\label{eqdifF}
\dot{{\mathcal{F}}}(z)={\mathcal{F}}(z)(zI_{2 \times 2}-\mathcal{U}_1) - I_{2\times 2} \, .
\end{equation}
\item[(e)] For all ${\mathcal{B}} \in {\mathbb{P}}^2$ we have that
\begin{equation}\label{eqdifU}
\left(\frac{d}{dt}{\mathcal{U}}\right)({\mathcal{B}})={\mathcal{U}}(x^2{\mathcal{B}})-{\mathcal{U}}({\mathcal{B}}){\mathcal{U}}_1 \, .
\end{equation}
\item[(f)] For all $m\in \mathbb{N}\cup\{0\}$, the polynomial ${{\mathcal{B}}}_m$ defined by~\eqref{eq:300} satisfies
\begin{equation}\label{CarB}
\dot{{{\mathcal{B}}}}_m(x)=-C_m{{\mathcal{B}}}_{m-1}(x)-D_m{{\mathcal{B}}}_m(x) \, , \ \mbox{ with } \ \
D_m=\left[
        \begin{matrix}
          0 & 0 \\
          c_{2m+1} & 0 \\
        \end{matrix}
      \right] \, .
\end{equation}

\item[(g)] The polynomial $ \{ V_m \} $ defined by~$\mathcal{B}_m (x) = V_m (x^2) \mathcal{P}_0 $ satisfies
\begin{equation}\label{CarV}
\dot{V}_m (x) = - C_m {V}_{m-1}(x) - D_m{V}_m(x) \, ,  \ \ m \in \mathbb{N}\cup\{0\} \, ,
\end{equation}
with $D_m$ given in (f).
\end{itemize}
\end{teo}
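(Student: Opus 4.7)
The plan is to prove the seven conditions equivalent by closing the cycle (a)$\Rightarrow$(b)$\Rightarrow$(c)$\Rightarrow$(d)$\Rightarrow$(e)$\Rightarrow$(f)$\Rightarrow$(g)$\Rightarrow$(a), several of whose links are bidirectional and will be verified as such. The central tools are Theorem~\ref{teo2} (identifying $J^n_{11}=M\mathcal{U}_n M^{-1}$ via $R_J=M\mathcal{F}M^{-1}$), the bi-orthogonality relations $(x^{2k}\mathcal{U})(\mathcal{B}_m)=\Delta_m\delta_{km}$, the Leibniz rule~\eqref{derivadat}, and the identity $D_0 M=MD_0=D_0$; combined with the hypothesis $\dot a_1=c_1$ the latter yields $\dot M=-D_0$, and it is this identity that bridges the analytic (moment) side with the algebraic (polynomial) side.

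Several links reduce to formal bookkeeping. The equivalence (c)$\Leftrightarrow$(d) is a Laurent-coefficient comparison for $\mathcal{F}(z)=\sum\mathcal{U}_n/z^{n+1}$: expanding $\mathcal{F}(z)(zI-\mathcal{U}_1)-I$ and using $\mathcal{U}_0=I$ gives $\sum_n(\mathcal{U}_{n+1}-\mathcal{U}_n\mathcal{U}_1)/z^{n+1}$, which matches $\dot{\mathcal{F}}(z)$ term by term iff (c) holds. The equivalence (c)$\Leftrightarrow$(e) is linearity: since $\{x^{2n}\mathcal{P}_0\}_{n\geq 0}$ spans $\mathbb{P}^2$ and is $t$-independent, (derivadat) turns (e) on this basis into exactly (c). The equivalence (f)$\Leftrightarrow$(g) follows from $\mathcal{B}_m=V_m(x^2)\mathcal{P}_0$ together with the injectivity of the evaluation $V\mapsto V(x^2)\mathcal{P}_0$ (a consequence of $1,x$ being linearly independent as polynomials).

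The substantive implications are (a)$\Rightarrow$(b), (b)$\Rightarrow$(c), and (e)$\Rightarrow$(f). For (a)$\Rightarrow$(b), I first prove inductively that $\dot J^n=[J^n,J_-]$ via the telescoping $\sum_k J^k(JJ_--J_-J)J^{n-1-k}=J^n J_--J_- J^n$, then extract the $(1,1)$-block. Only $(J^n)_{11}$ and $(J^n)_{12}$ contribute, because $J_-$ has zero blocks above the main diagonal and beyond one subdiagonal; the unwanted $(J^n)_{12} C_1$ term is eliminated using $(J^{n+1})_{11}=J^n_{11} B_0+(J^n)_{12} C_1$. For (b)$\Rightarrow$(c), substitute $J^n_{11}=M\mathcal{U}_n M^{-1}$, differentiate, and use $\dot M=-D_0$ together with $\mathcal{U}_1=M^{-1}B_0 M$ (obtained by applying $\mathcal{U}$ to $x^2\mathcal{B}_0=A_0\mathcal{B}_1+B_0\mathcal{B}_0$ and invoking orthogonality); the commutation $D_0 M=MD_0=D_0$ collapses (b) to (c). For (e)$\Rightarrow$(f), expand $\dot{\mathcal{B}}_m=\sum_{k=0}^m K_{mk}\mathcal{B}_k$ (permitted because $\deg\dot{\mathcal{B}}_m\leq m$), apply $(x^{2j}\mathcal{U})$, and combine (derivadat) with (e) to obtain
$$(x^{2j}\mathcal{U})(\dot{\mathcal{B}}_m)=\dot\Delta_m\delta_{jm}-\Delta_m\delta_{j+1,m}+\Delta_m\delta_{jm}\mathcal{U}_1;$$
bi-orthogonality then forces $K_{mj}=0$ for $j<m-1$, $K_{m,m-1}=-C_m=-\Delta_m\Delta_{m-1}^{-1}$, and $K_{mm}=-D_m$.

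The closing implication (g)$\Rightarrow$(a) is obtained by differentiating the three-term recurrence $xV_m=A_m V_{m+1}+B_m V_m+C_m V_{m-1}$, substituting~\eqref{CarV} for each $\dot V_\bullet$, and matching the expansion in $\{V_k\}$. The coefficient of $V_{m+1}$ gives $\dot A_m=A_m D_{m+1}-D_m A_m$; the coefficients of $V_m$ and $V_{m-1}$ give the corresponding evolution equations for $B_m$ and $C_m$; these three matrix identities are jointly the block form of~\eqref{eq:Jlax}, equivalently the entrywise system~\eqref{sisteqdif}. The hardest step will be the identification $K_{mm}=-D_m$ in (e)$\Rightarrow$(f): this hinges on the inductive tower $\dot\Delta_m=-D_m\Delta_m-\Delta_m\mathcal{U}_1$ built from $\Delta_m=C_m\Delta_{m-1}$, whose base case $\dot\Delta_0=\dot M=-D_0$ is precisely the hypothesis $\dot a_1=c_1$. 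Without this input the passage from the evolution of moments to the evolution of polynomials cannot be closed.
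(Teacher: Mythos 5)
Your overall architecture coincides with the paper's: the same cycle $(a)\Rightarrow(b)\Rightarrow\cdots\Rightarrow(g)\Rightarrow(a)$, the same identification $J^n_{11}=M\,\mathcal{U}_n\,M^{-1}$ from Theorem~\ref{teo2}, the same induction for $\dot{J^n}=[J^n,J_{-}]$ with the block bookkeeping $J^{n+1}_{11}=J^n_{11}J_{11}+J^n_{12}J_{21}$, $J_{21}=(J_{-})_{21}$, and the same differentiation of~\eqref{rrrV} to close $(g)\Rightarrow(a)$. Replacing the paper's generating--function passage $(d)\Rightarrow(e)$ by the direct equivalence $(c)\Leftrightarrow(e)$ tested on the spanning set $\{x^{2n}\mathcal{P}_0\}$ is a legitimate, slightly cleaner shortcut, since the cycle still closes through $(c)\Leftrightarrow(d)$.

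There is, however, a genuine gap in $(e)\Rightarrow(f)$, at the identification $K_{mm}=-D_m$. First, your displayed formula $(x^{2j}\mathcal{U})(\dot{\mathcal{B}}_m)=\dot\Delta_m\delta_{jm}-\Delta_m\delta_{j+1,m}+\Delta_m\delta_{jm}\mathcal{U}_1$ fails precisely at $j=m$: the orthogonality~\eqref{cotra1} gives $\mathcal{U}(x^{2(j+1)}\mathcal{B}_m)=\Delta_m\delta_{j+1,m}$ only for $j+1\le m$, while for $j=m$ the recurrence~\eqref{eq:300} yields $\mathcal{U}(x^{2(m+1)}\mathcal{B}_m)=B_m\Delta_m+C_m\,\mathcal{U}(x^{2m}\mathcal{B}_{m-1})\neq 0$; after the $C_m\mathcal{U}(x^{2m}\mathcal{B}_{m-1})$ terms cancel, the correct relation is $K_{mm}\Delta_m=\dot\Delta_m-B_m\Delta_m+\Delta_m\mathcal{U}_1$. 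Second, this single relation cannot determine $K_{mm}$, since $\dot\Delta_m$ is equally unknown, and your proposed way out --- the ``inductive tower'' $\dot\Delta_m=-D_m\Delta_m-\Delta_m\mathcal{U}_1$ --- is already false at $m=0$: with $\Delta_0=M$ one has $D_0M=D_0$ and $M\mathcal{U}_1=B_0M$, so the claimed right-hand side equals $-D_0-B_0M$, which differs from $\dot M=-D_0$; moreover the inductive step via $\Delta_m=C_m\Delta_{m-1}$ would require $\dot C_m$, which is part of what is being proved. The two missing ingredients (and this is exactly how the paper closes the step) are: (i) expand $\mathcal{B}_m=\sum_j\beta^m_j\mathcal{P}_j$ and observe that the leading coefficient is unit lower triangular, so that $K_{mm}=\dot\beta^m_m$ has at most a $(2,1)$ entry; and (ii) rewrite the relation as $K_{mm}+B_m=\dot\Delta_m\Delta_m^{-1}+\Delta_m\mathcal{U}_1\Delta_m^{-1}$ and use $\Delta_m=(C_m\cdots C_1)M$, with each $C_j$ upper triangular and with $\dot MM^{-1}+B_0$ upper triangular exactly because $\dot a_1=c_1$, to conclude that the right-hand side is upper triangular. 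Comparing the $(2,1)$ entries then gives $\alpha_m+c_{2m+1}=0$, i.e.\ $K_{mm}=-D_m$. Without (i) and (ii) the passage from the moment evolution~\eqref{eqdifU} to~\eqref{CarB} does not close.
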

\begin{proof}
We will prove this theorem according to the following scheme:
$$(a) \Rightarrow (b) \Rightarrow (c) \Rightarrow (d) \Rightarrow (e) \Rightarrow (f)  \Rightarrow (g) \Rightarrow (a) \, . $$

We start, proving that $(a)\Rightarrow (b)$. Since the sequence $\{a_n,b_n,c_n,d_n\},\,\, n \in {\mathbb{N}}$ is a solution of~\eqref{sisteqdif}
we have that $\dot{J}=[J,J_{-}] $.
It is easy to prove by induction that
$$\frac{d}{dt}J^n=J^nJ_{-}-J_{-}J^n \, .$$
For $n=1$ the result is straightforward.
Now, suppose that for $n=2,\ldots, p$ the relation $\displaystyle \frac{d}{dt}J^p=J^pJ_{-}-J_{-}J^p$ holds. Then, we just have to prove that the results is also valid for $n=p+1$.
Since,
\begin{eqnarray*}
\frac{d}{dt}J^{p+1}&=&\frac{d}{dt}(J^pJ)=\frac{d}{dt}(J^p)J+J^p\frac{d}{dt}(J)\\&=&
(J^pJ_{-}-J_{-}J^p)J+J^p(JJ_{-}-J_{-}J)= J^{p+1}J_{-}-J_{-}J^{p+1},
\end{eqnarray*}
the result holds.
In particular, we have that
\begin{equation*}
\frac{d}{dt}J^n_{11}=(J^nJ_{-})_{11}-(J_{-}J^n)_{11} \, .
\end{equation*}
From~\eqref{eq:J} and~\eqref{blockmatrix},
$$\frac{d}{dt}J^n_{11}=J_{11}^n(J_{-})_{11}+J_{12}^n(J_{-})_{21}-(J_{-})_{11}J^n_{11} \, . $$
But, we have that
$$J^{n+1}_{11}=J^n_{11}J_{11}+J^n_{12}J_{21} \, , $$
or equivalently, since $J_{21}=(J_{-})_{21}$, that
$$J^n_{12}(J_{-})_{21}=J^{n+1}_{11}-J^n_{11}J_{11} \, . $$
Therefore, we have that
$$\frac{d}{dt}J^n_{11}=J^{n+1}_{11}-J^n_{11}J_{11}+[J^n_{11},(J_{-})_{11}] \, . $$

To prove $(b)\Rightarrow (c)$ we have to read the differential equation~\eqref{eq:dif} in terms of the moments.
Regarding that we are working with normalized vector functionals and that $J^n_{11}=M(x^{2n}{\mathcal{U}})({\mathcal{P}}_0)M^{-1}$ (cf. Theorem~\ref{teo2}) the relation~\eqref{eq:dif} becomes
\begin{multline*}
\frac{d}{dt}(M(x^{2n}
{\mathcal{U}})({\mathcal{P}}_0)M^{-1})=M(x^{2(n+1)}
{\mathcal{U}})({\mathcal{P}}_0)M^{-1}-M(x^{2n}
{\mathcal{U}})({\mathcal{P}}_0)M^{-1}J_{11}\\+M(x^{2n}
{\mathcal{U}})({\mathcal{P}}_0)M^{-1}(J_{-})_{11}-(J_{-})_{11}M(x^{2n}
{\mathcal{U}})({\mathcal{P}}_0)M^{-1}.
\end{multline*}
Since the moment ${\mathcal{U}}_n={\mathcal{U}}(\mathcal{P}_n)=(x^{2n}
{\mathcal{U}})({\mathcal{P}}_0)$ we have that
\begin{multline}\label{aux}
\frac{d}{dt}(M{\mathcal{U}}_nM^{-1})=M{\mathcal{U}}_{n+1}M^{-1}-M{\mathcal{U}}_nM^{-1}J_{11}\\
+M{\mathcal{U}}_nM^{-1}(J_{-})_{11}-(J_{-})_{11}M{\mathcal{U}}_nM^{-1}.
\end{multline}
Taking in consideration that
$$\dot{M}=\left[
            \begin{matrix}
              0 & 0 \\
              -\dot{a_1} & 0 \\
            \end{matrix}
          \right], \,\,   \dot{\widehat{M^{-1}}}=\left[
            \begin{matrix}
              0 & 0 \\
              \dot{a_1} & 0 \\
            \end{matrix}
          \right],\,\,(J_{-})_{11}=\left[
            \begin{matrix}
              0 & 0 \\
              c_1 & 0 \\
            \end{matrix}
          \right], \,\, \dot{a_1}=c_1 \, , $$
and the fact that~\eqref{aux} can be written like
$${\dot{\mathcal{U}}}_n={\mathcal{U}}_{n+1}-M^{-1}[\dot{M}+(J_{-})_{11}M]{\mathcal{U}}_n+{\mathcal{U}}_n[-M^{-1}J_{11}M- \dot{\widehat{M^{-1}}}M+M^{-1}(J_{-})_{11}]$$
we have
$${\dot{\mathcal{U}}}_n={\mathcal{U}}_{n+1}-{\mathcal{U}}_n \, M^{-1}J_{11}M \, . $$
Since $(x^{2n}{\mathcal{U}})({\mathcal{P}}_0)=M^{-1}J^n_{11}M$ we have that the last relation is equivalent to~\eqref{momentos}.
Now, we prove that $(c)\Rightarrow (d)$ remember that ${\mathcal{F}}$ can be written
$${\mathcal{F}}(z)= \sum_{n=0}^\infty \frac{{\mathcal{U}}_n}{z^{n+1}} \, . $$

Then, from~\eqref{momentos},
\begin{eqnarray*}
\frac{d}{dt}{\mathcal{F}}(z)&=&\sum_{n=0}^\infty \frac{\dot{{\mathcal{U}}}_n}{z^{n+1}}\\
&=&\sum_{n=0}^\infty \frac{{\mathcal{U}}_{n+1}}{z^{n+1}}-\left(\sum_{n=0}^\infty \frac{{\mathcal{U}}_{n}}{z^{n+1}}\right){\mathcal{U}}_1\\
&=&z\sum_{n=0}^\infty \frac{{\mathcal{U}}_{n+1}}{z^{n+2}}-{\mathcal{F}}(z){\mathcal{U}}_1
\\
&=&z\left({\mathcal{F}}(z)-\frac{\mathcal{U}_0}{z}\right)-{\mathcal{F}}(z){\mathcal{U}}_1
\\&=&{\mathcal{F}}(z)(zI_{2\times 2}-{\mathcal{U}}_1)-{\mathcal{U}}_0 \, ,
\end{eqnarray*}
and as ${\mathcal{U}}_0 = I_{2 \times 2}$ we get the desired equation for $\mathcal{F}$.

To prove that $(d)\Rightarrow(e)$ we are going to obtain the derivative of the vector functional ${\mathcal{U}}$ from~\eqref{eqdifF}. To do this, we use the linearity of ${\mathcal{U}}$ and the convergence of the series,
\begin{equation} \label{expaF}
{\mathcal{F}}(z)= \sum_{n=0}^\infty
\frac{{\mathcal{U}}\left( x^{2n} {\mathcal{P}_0}(x) \right) }{z^{n+1}}  = {\mathcal{U}}_x\left(
\frac{{\mathcal{P}_0}(x)}{z-x^2}\right) \, , \ \  ||J||<|z| \, .
\end{equation}

For sake of simplicity here and in the next expressions we will use ${\mathcal{U}}_x={\mathcal{U}}$. From~\eqref{eqdifF} and~\eqref{expaF},
\begin{align*}
&\frac{d}{dt}{\mathcal{U}}\left(
\frac{{\mathcal{P}_0}}{z-x^2}\right)={\mathcal{U}}\left(
\frac{{\mathcal{P}_0}}{z-x^2}\right)(zI_{2 \times 2}-{\mathcal{U}}_1)-{\mathcal{U}}_0\\
&=  {\mathcal{U}}\left(
\left(1+\frac{x^2}{z-x^2}\right){\mathcal{P}_0}\right)-{\mathcal{U}}\left(
\frac{{\mathcal{P}_0}}{z-x^2}\right){\mathcal{U}}(x^2{\mathcal{P}}_0) - {\mathcal{U}}({\mathcal{P}}_0)\\
&= {\mathcal{U}}\left(
\frac{x^2}{z-x^2}{\mathcal{P}_0}\right)-{\mathcal{U}}\left(
\frac{{\mathcal{P}_0}}{z-x^2}\right){\mathcal{U}}(x^2{\mathcal{P}}_0).
\end{align*}

Now, we define de auxiliary vector functionals ${\mathcal{U}}^1,\,{\mathcal{U}}^2:{\mathbb{P}}^2\rightarrow {\mathcal{M}}_{2 \times 2} (\mathbb{C})$ such that:
$$
  \begin{cases}
    {\mathcal{U}}^1={\mathcal{U}}(x^2\mathcal{B}) \\
    {\mathcal{U}}^2={\mathcal{U}}(\mathcal{B}){\mathcal{U}}(x^2\mathcal{P}_0)
  \end{cases}
$$
for each ${\mathcal{B}} \in {\mathbb{P}}^2$. We remark that $\frac{1}{z-x^2}{\mathcal{P}}_0$ do not depend on $t \in {\mathbb{R}}$.
 In~\eqref{expaF}, denoting ${\dot{\mathcal{U}}}=\frac{d}{dt}{\mathcal{U}}$, we have that 
 $ {\mathcal{U}} = {\mathcal{U}}^1 - {\mathcal{U}}^2$ 
 over $\frac{1}{z-x^2}{\mathcal{P}}_0$, being
$${\mathcal{U}}\left(\frac{1}{z-x^2}{\mathcal{P}}_0\right)=\frac{1}{z}{\mathcal{U}}({\mathcal{P}}_0)+
\frac{1}{z^2}{\mathcal{U}}(x^2{\mathcal{P}}_0)+\ldots, \,\,\, |z|>||J|| \, . $$
Hence, we have ${\mathcal{U}}={\mathcal{U}}^1-{\mathcal{U}}^2$ over ${\mathbb{P}}^2$, this is, we have~\eqref{eqdifU}.

For proving that $(e)\Rightarrow(f)$ we use the fact that $\dot{{{\mathcal{B}}}}_m$ can be written in terms of the sequence $\{{\mathcal{B}}_m\}
$,
\begin{equation}\label{greve}
\dot{{{\mathcal{B}}}}_m=\sum_{j=0}^m \alpha_j^m {{\mathcal{B}}}_j \, .
\end{equation}

For $m=0,1$, the above expression is
\begin{equation}\label{eqdifB}
\dot{{{\mathcal{B}}}}_m(x)=\alpha_{m-1}^m {{\mathcal{B}}}_{m-1}(x) +\alpha_m^m {{\mathcal{B}}}_m(x) \, .
\end{equation}

Let $m \geq 2$ be fixed. We are going to show that~\eqref{eqdifB} holds, also for $m$. Due to the orthogonality conditions~\eqref{cotra1}, i.e., $(x^{2k} {\mathcal{U}}) \left( {\mathcal{B}}_m \right)=
\Delta_m \delta_{k,m}, \, k=0,\ldots, m-1, \, m \in {\mathbb{N},}$ from~\eqref{greve}, we have that
$${\mathcal{U}}(\dot{{{\mathcal{B}}}}_m)=\alpha_{0}^m {\mathcal{U}}({{\mathcal{B}}}_{0}) \, . $$

In fact, using~\eqref{derivadat} and~\eqref{eqdifU},
\begin{eqnarray*}
0_{2 \times 2}= \frac{d}{dt}({\mathcal{U}}({{\mathcal{B}}}_m))&=& {\dot{\mathcal{U}}}({{\mathcal{B}}}_m)+{\mathcal{U}}(\dot{{{\mathcal{B}}}}_m)\\&=&
{\mathcal{U}}(x^2{{\mathcal{B}}}_m)-{\mathcal{U}}({{\mathcal{B}}}_m){\mathcal{U}}(x^2{\mathcal{P}_0})+\alpha_{0}^m {\mathcal{U}}({{\mathcal{B}}}_{0}).
\end{eqnarray*}
Thus, from orthogonality, we have $\alpha^m_0=0_{2 \times 2}$. We proceed by induction on $m$, assuming
$$\alpha^m_0=\alpha^m_1=\cdots=\alpha^m_{j-1}=0_{2 \times 2} \, , $$
for a fixed $j < m-1$. Using~\eqref{greve} and, again,~\eqref{eqdifU} and the orthogonality conditions,
\begin{eqnarray*}
0_{2 \times 2}= \frac{d}{dt}({\mathcal{U}}(x^{2j}{{\mathcal{B}}}_m))&=& {\dot{\mathcal{U}}}(x^{2j}{{\mathcal{B}}}_m)+{\mathcal{U}}(x^{2j}\dot{{{\mathcal{B}}}}_m)\\&=&
{\mathcal{U}}(x^{2j+1}{{\mathcal{B}}}_m)-{\mathcal{U}}(x^{2j}{{\mathcal{B}}}_m){\mathcal{U}}(x^2{\mathcal{P}_0})+\alpha_{j}^m {\mathcal{U}}(x^{2j}{{\mathcal{B}}}_{j})\\&=&\alpha_{j}^m {\mathcal{U}}(x^{2j}{{\mathcal{B}}}_{j})\\&=&\alpha_{j}^m \Delta_j,
\end{eqnarray*}
where $\Delta_j$ is an invertible matrix. Thus, $\alpha_j^m=0_{2 \times 2}$ and~\eqref{eqdifB} is verified for any $m \in {\mathbb{N}}$.

Our next purpose is to determine $\alpha^m_m$ and $\alpha_{m-1}^m$. From,~\eqref{eqdifB},
we have that
$${\mathcal{U}}(x^{2(m-1)}\dot{{{\mathcal{B}}}}_m)=\alpha_{m-1}^m {\mathcal{U}}(x^{2(m-1)} {{\mathcal{B}}}_{m-1}).
$$
Then, because of~\eqref{eqdifU} and the orthogonality conditions
\begin{eqnarray*}
0_{2 \times 2}&=&\frac{d}{dt}({\mathcal{U}}(x^{2(m-1)}{{\mathcal{B}}}_m))\\&=&
{\mathcal{U}}(x^{2m}{{\mathcal{B}}}_m)-{\mathcal{U}}(x^{2(m-1)}{{\mathcal{B}}}_m){\mathcal{U}}(x^2{\mathcal{P}_0})
+\alpha_{m-1}^m {\mathcal{U}}(x^{2(m-1)}{{\mathcal{B}}}_{m-1}).
\end{eqnarray*}
Therefore $\alpha_{m-1}^m=-\Delta_m(\Delta_{m-1})^{-1}=-C_m$.

On the other hand, writing
\begin{equation}\label{desenB}
{{\mathcal{B}}}_m(x)=\sum_{j=0}^m \beta_j^m {\mathcal{P}}_j(x) \, ,
\end{equation}
and comparing the coefficient of $x^{2m}$ and $x^{2m+1}$ in both sides of~\eqref{desenB}, we obtain
$$
\beta_j^m=\left[
              \begin{matrix}
                1 & 0 \\
                \beta_m & 1 \\
              \end{matrix}
            \right] \, .
$$
Moreover, taking derivatives in~\eqref{desenB} and comparing with~\eqref{eqdifB}, we see that~$\dot{\beta}_j^m$ $=\alpha_m^m$ or, what is the same,
$$\alpha_m^m=\left[
              \begin{matrix}
                0 & 0 \\
                \alpha_m & 0 \\
              \end{matrix}
            \right] \, ,
$$
where we need to determine $\alpha_m$. From~\eqref{eqdifB} and~\eqref{eqdifU},
\begin{multline*}
\alpha_m^m {\mathcal{U}}(x^{2m}{{\mathcal{B}}}_m)=\frac{d}{dt}{\mathcal{U}}(x^{2m}{{\mathcal{B}}}_m)-{\mathcal{U}}(x^{2(m+1)}{{\mathcal{B}}}_m)
\\+{\mathcal{U}}(x^{2m}{{\mathcal{B}}}_m){\mathcal{U}}(x^2{\mathcal{P}_0})+C_m {\mathcal{U}}(x^{2m}{{\mathcal{B}}}_{m-1}) \, .
\end{multline*}
Using the orthogonality conditions and~\eqref{eq:300}
\begin{eqnarray*}
\alpha_m^m \Delta_m=\left(\frac{d}{dt}(\Delta_m)-B_m \Delta_m
+\Delta_m{\mathcal{U}}(x^2{\mathcal{P}_0})\right) \, .
\end{eqnarray*}
Thus,
\begin{eqnarray*}
\alpha_m^m + B_m=\frac{d}{dt}(\Delta_m)(\Delta_m)^{-1}+\Delta_m(M^{-1}J_{11}M)(\Delta_m)^{-1} \, .
\end{eqnarray*}
But, $\Delta_m =C_mC_{m-1}\cdots C_1 \Delta_0$ with $\Delta_0=M$ (see~\cite{anamendes}). 
Then,
\begin{multline}
\label{eq:200}
\alpha_m^m + B_m=\frac{d}{dt}(C_m C_{m-1}\cdots C_1)(C_m C_{m-1}\cdots C_1)^{-1}\\+(C_mC_{m-1}\cdots C_1)( \dot{M}M^{-1} +J_{11})(C_mC_{m-1}\cdots C_1)^{-1} \, .
\end{multline}

The matrix $C_mC_{m-1}\cdots C_1$ is an upper triangular matrix. Moreover, because of $\dot{a}_1 = c_1$ also
$\dot{M}M^{-1} +J_{11}$ is an upper triangular matrix and, then, the matrix in the left-side of~\eqref{eq:200} is upper triangular and, consequently $\alpha_m=c_{2m+1}$.

Now, (g) is true as~\eqref{CarV} is the interpretation of~\eqref{CarB} for the polynomials $\{ V_m \}$.

 Finally, to prove that $(g)\Rightarrow(a)$ we have to take derivatives in~\eqref{rrrV},
\begin{multline*}
 x \dot{V}_m (x) = \dot{A}_{m} {V}_{m+1} (x) + A_{m} \dot{V}_{m+1} (x)  + \dot{B}_m
{V}_m (x) \\
+  B_m \dot{V}_m (x)+ \dot{C}_m {V}_{m-1} (x) + C_m \dot{V}_{m-1} (x), \, \, m \geq 1 \, ,
 \end{multline*}
Using~\eqref{CarV} and taking into account~\eqref{rrrV} we get
\begin{multline*}
(A_mC_{m+1} - C_mA_{m-1} - D_mB_m + B_mD_m){V}_{m} (x)  \\
+(B_m C_m - C_mB_{m-1} - D_mC_m + C_m D_{m-1}){V}_{m-1} (x)  
 \\
+ (A_mD_{m+1} - D_mA_m){V}_{m+1} (x) = \dot{A}_m {V}_{m+1} (x) + \dot{B}_m {V}_{m} (x) + \dot{C}_m {V}_{m-1} (x) \, .
\end{multline*}
Hence, we arrive to
\begin{equation}\label{sistmatri}
\begin{cases}
     \dot{A}_m=A_mD_{m+1}-D_mA_m\\
     \dot{B}_m=A_mC_{m+1}-C_mA_{m-1}+B_mD_m-D_mB_m\\
     \dot{C}_m=B_m C_m-C_mB_{m-1}+C_m D_{m-1}-D_mC_m
  \end{cases} \, , \ \ m=0,1, \ldots \, .
\end{equation}
Taking into account that, with the above notation, $D_m=(J_{-})_{m+1,m+1}$, we see that~\eqref{sistmatri} is equivalent to~\eqref{eq:Jlax}.
\end{proof}

\begin{rem}
We shall notice that the equation~\eqref{CarV} for the matrix polynomials, $\{ V_m \}$, appears in the study of 
semi-classic families of matrix orthogonal polynomials done in~\cite{cmv} and~\cite{DurIsmail}.
\end{rem}

\section{Representation for the Weyl function} \label{sec:4}

In this section we present a result that gives a explicit expression for the Weyl function and we also present another result that gives, under some conditions, a representation of the vector of linear functionals associated with the system studied in the last section.

Considering that $\displaystyle e^{x^2t}=\sum_{k=0}^{+\infty} \frac{t^k}{k!}x^{2k}$ and given a vector of linear functionals ${\mathcal{U}^0}:{\mathcal{P}}\rightarrow {\mathcal{M}}_{2 \times 2}(\mathbb{R})$, which is the vector of functionals $\mathcal{U}$ for $t = 0$, we can always define a vector of linear functionals $e^{x^2t}{\mathcal{U}^0} : {\mathcal{P}}\rightarrow {\mathcal{M}}_{2 \times 2}(\mathbb{R})$ such as
$$
(e^{x^2t}{\mathcal{U}}^0)({\mathcal{P}}_j) = \left(\sum_{k=0}^{+\infty} \frac{t^k}{k!}x^{2k}
 {\mathcal{U}}^0 
\right)(x^{2j}{\mathcal{P}}_0)=
\sum_{k=0}^{+\infty} \frac{t^k}{k!}{\mathcal{U}}^0 (x^{2(j+k)}\mathcal{P}_{0}) \, . 
$$
Now we give a representation of $\mathcal{U}$ associated with the problem under discussion.
\begin{teo} 
In the conditions of Theorem~\ref{teo:7} assume that the vector of linear func\-tio\-nals ${\mathcal{U}}$ verifies
${\mathcal{U}}({\mathcal{P}})=(e^{x^2t}{\mathcal{U}}^0)({\mathcal{P}})E $, for some $E\in {\mathcal{M}}_{2 \times 2}(\mathbb{C})$ 
Then, $\{a_n,b_n,c_n,d_n\}$, $n \in {\mathbb{N}}$, is a solution of~\eqref{sisteqdif}.
\end{teo}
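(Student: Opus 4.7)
The plan is to verify condition (e) of Theorem~\ref{teo:7} directly from the representation and then invoke the implication (e)$\Rightarrow$(a) to conclude. That is, I will aim to show
\[
\dot{\mathcal{U}}(\mathcal{B}) = \mathcal{U}(x^2\mathcal{B}) - \mathcal{U}(\mathcal{B})\,\mathcal{U}_1, \qquad \mathcal{B}\in \mathbb{P}^2.
\]
The series defining $e^{x^2 t}\mathcal{U}^0$ differentiates termwise (the uniform boundedness of the moments carried over from Theorem~\ref{teo:7} giving the required convergence) to yield
\[
\tfrac{d}{dt}\bigl[(e^{x^2 t}\mathcal{U}^0)(\mathcal{P})\bigr] = (e^{x^2 t}\mathcal{U}^0)(x^2\mathcal{P}),
\]
so applying the product rule (in the functional sense of~\eqref{derivadat}) to $\mathcal{U}(\mathcal{P}) = (e^{x^2 t}\mathcal{U}^0)(\mathcal{P})\,E$, while allowing $E=E(t)$, gives
\[
\dot{\mathcal{U}}(\mathcal{P}) \;=\; \mathcal{U}(x^2\mathcal{P}) + (e^{x^2 t}\mathcal{U}^0)(\mathcal{P})\,\dot{E}.
\]

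Next I will use the normalization $\mathcal{U}_0=\mathcal{U}(\mathcal{P}_0)=I$ inherited from the standing hypothesis ``in the conditions of Theorem~\ref{teo:7}'' to pin down $\dot{E}$. Specializing the previous display at $\mathcal{P}=\mathcal{P}_0$ and using $\dot{\mathcal{U}}_0=0$ together with $\mathcal{U}(x^2\mathcal{P}_0)=\mathcal{U}_1$ yields $(e^{x^2 t}\mathcal{U}^0)(\mathcal{P}_0)\,\dot{E}=-\mathcal{U}_1$. Since the normalization also forces $(e^{x^2 t}\mathcal{U}^0)(\mathcal{P}_0)=E^{-1}$, this reduces to $\dot{E} = -E\,\mathcal{U}_1$. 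Feeding this back into the expression for $\dot{\mathcal{U}}(\mathcal{P})$ collapses the last term to $-(e^{x^2 t}\mathcal{U}^0)(\mathcal{P})\,E\,\mathcal{U}_1 = -\mathcal{U}(\mathcal{P})\,\mathcal{U}_1$, and condition (e) follows. The equivalence between (a) and (e) in Theorem~\ref{teo:7} then delivers that $\{a_n,b_n,c_n,d_n\}$ is a solution of~\eqref{sisteqdif}.

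The step that requires most care is the role of the matrix $E$. The statement reads as if $E$ were a \emph{constant} matrix, but the requirement $\mathcal{U}(\mathcal{P}_0)=I$ at every $t$ actually forces $E=E(t)$ to solve the initial-value problem $\dot{E}=-E\mathcal{U}_1$, $E(0)=(\mathcal{U}^0(\mathcal{P}_0))^{-1}$; alternatively, the claim should be interpreted as demanding that the $E$ in the representation is compatible with this ODE. Making this compatibility explicit (and, in parallel, carefully justifying the termwise differentiation of $e^{x^2 t}\mathcal{U}^0$ acting on an element of $\mathbb{P}^2$) is where I anticipate the only real friction; once that is done, the remainder of the argument is an almost mechanical chain through Theorem~\ref{teo:7}.
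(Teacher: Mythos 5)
Your proof is correct and follows essentially the same route as the paper: both differentiate the representation $\mathcal{U}(\cdot)=(e^{x^2t}\mathcal{U}^0)(\cdot)E$, use the normalization $\mathcal{U}(\mathcal{P}_0)=I_{2\times 2}$ to force $E=[(e^{x^2t}\mathcal{U}^0)(\mathcal{P}_0)]^{-1}$ and hence $\dot{E}=-E\,\mathcal{U}_1$, and then conclude via one of the equivalent conditions of Theorem~\ref{teo:7} (you target (e), the paper targets the moment version (c) — an immaterial difference). Your explicit remark that $E$ must be read as time-dependent is a fair clarification of a point the paper leaves implicit while nonetheless computing $dE/dt$.
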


\begin{proof}
Since ${\mathcal{U}}$ is a normalized vector functional necessarily the assumption ${\mathcal{U}}({\mathcal{P}})=(e^{x^2t}{\mathcal{U}}^0)({\mathcal{P}})E$ implies
$$E=[(e^{x^2t}{\mathcal{U}}^0)({\mathcal{P}}_0)]^{-1} \, . $$
On the other hand, with this assumption, if we want to prove that $\{ a_n,b_n,c_n,$ $ d_n \}$, $n \in {\mathbb{N}}$ is a solution of~\eqref{sisteqdif} it is sufficient to show that~\eqref{momentos} holds.

Since, 
$$
\displaystyle \frac{d}{dt}(e^{x^2t}{\mathcal{U}}^0)({\mathcal{P}}_0)=(e^{x^2t}{\mathcal{U}}^0)(x^2{\mathcal{P}}_0)
$$ 
and 
$$
\displaystyle \frac{dE}{dt}=-E\frac{d[(e^{x^2t}{\mathcal{U}}^0)({\mathcal{P}}_0)]}{dt} E = - E ((e^{x^2t}{\mathcal{U}}^0)(x^2{\mathcal{P}}_0))E =
- E \, {\mathcal{U}}(x^2{\mathcal{P}}_0)
$$ 
if we take the derivatives in
$$
{\mathcal{U}}(x^{2k}\mathcal{P}_0)=(e^{x^2t}{\mathcal{U}}^0)(x^{2k}{\mathcal{P}}_0)  E \, ,
$$
we arrive to~\eqref{momentos}.
\end{proof}
\begin{teo}
Assume that the sequence $\{a_n,b_n,c_n,d_n\},\,\, n \in {\mathbb{N}}$, is uniformly bounded, i.e., $ \exists \, K \in {\mathbb{R}}_+$ such that $\operatorname{max}\{ |a_n(t)| , |b_n(t)| , |c_n(t)| , |d_n(t)| \} \leq M$ for all $n \in {\mathbb{N}}$ and $t \in {\mathbb{R}}$.
Then $\{a_n,b_n,c_n,d_n\}$, $n \in {\mathbb{N}}$, is a solution of~\eqref{sisteqdif} if, and only if, the Weyl function satisfy
\begin{equation}\label{eqdif:RJ}
\dot{R}_J(z)=R_J(z)(zI_{2 \times 2}-J_{11})-I_{2 \times 2}+[R_J(z),(J_{-})_{11}] \, ,
\end{equation}
for all $z\in{}\mathbb{C}$ such that $|z|>||J||$.
 \\
Moreover, the Weyl function in this case is given by
\begin{equation} \label{Rexp}
R_J(z)=e^{zt} M \, T (t,z) (N(t))^{-1} \, ,
\end{equation}
where
$$N(t)=\left[
         \begin{matrix}
           e^{\int_0^t b_1 ds} & e^{\int_0^t b_1 ds} \int_{0}^t a_2 \, e^{\int_0^s( b_2-b_1 )dr}ds \\
           0 & e^{\int_0^t b_2 ds} \\
         \end{matrix}
       \right] \, ,
$$
$$T(t,z)=-\int_{0}^t e^{-zs}M^{-1}N(s)ds + M_0^{-1} R_0(z)  \, , $$
here $M_0$ and $R_0(z)$ are, respectively, $M$ and $R_J(z)$ for $t=0$.
\end{teo}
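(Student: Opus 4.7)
The plan is to reduce both claims to Theorem~\ref{teo:7} and Theorem~\ref{teo2}. For the equivalence, I start from the similarity $R_J(z) = M\,\mathcal{F}(z)\,M^{-1}$ given by Theorem~\ref{teo2}, noting that now both $M$ and $\mathcal{F}$ depend on $t$. Differentiating and exploiting the equalities $\dot M = \begin{pmatrix} 0 & 0 \\ -\dot a_1 & 0 \end{pmatrix}$, $\dot{M^{-1}} = (J_{-})_{11}$ and $\mathcal{U}_1 = M^{-1} J_{11} M$ (the last being $(x^{2n}\mathcal{U})(\mathcal{P}_0) = M^{-1} J^n_{11} M$ for $n=1$, as established inside the proof of Theorem~\ref{teo:7}), I translate the differential equation $\dot{\mathcal{F}}(z) = \mathcal{F}(z)(zI_{2\times 2}-\mathcal{U}_1) - I_{2\times 2}$ from Theorem~\ref{teo:7}(d) into $\dot R_J(z) = R_J(z)(zI_{2\times 2} - J_{11}) - I_{2\times 2} + [R_J(z),(J_{-})_{11}]$. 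Each step is reversible, and since (a) $\Leftrightarrow$ (d) in Theorem~\ref{teo:7}, this gives the claimed equivalence (with $\dot a_1 = c_1$ automatically furnished by the first equation of~\eqref{sisteqdif} together with the initial conventions).

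For the explicit representation, I substitute the ansatz $R_J(z) = e^{zt} M\, T(t,z)\, N(t)^{-1}$ into~\eqref{eqdif:RJ}. Using $\dot{N^{-1}} = -N^{-1}\dot N\, N^{-1}$, the identity to verify becomes
\begin{equation*}
e^{zt}(\dot M\, T + M\,\dot T)N^{-1} - R_J\,\dot N\,N^{-1} = -R_J\bigl(J_{11} - (J_{-})_{11}\bigr) - I_{2\times 2} - (J_{-})_{11} R_J.
\end{equation*}
The idea is to decouple this by forcing the coefficient of $R_J$ on the right to disappear, i.e.\ by imposing $\dot N = \bigl(J_{11}-(J_{-})_{11}\bigr)N$ with $N(0) = I_{2\times 2}$. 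Since $J_{11}-(J_{-})_{11} = \begin{pmatrix} b_1 & a_2 \\ 0 & b_2 \end{pmatrix}$ is upper triangular, the corresponding linear system separates: the $(2,1)$ entry of $N$ stays zero, the diagonal entries integrate to $e^{\int_0^t b_1 ds}$ and $e^{\int_0^t b_2 ds}$ respectively, and the $(1,2)$ entry is obtained by the standard integrating-factor method with factor $e^{-\int_0^t b_1 ds}$, yielding precisely the $N(t)$ stated in the theorem.

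With $N$ so chosen, the identity reduces to $e^{zt}(\dot M T + M\dot T) = -N - e^{zt}(J_{-})_{11} M T$. A direct $2\times 2$ computation shows $\dot M + (J_{-})_{11}M = 0$ (this is where $\dot a_1 = c_1$ is used), so the $T$-dependent terms on the right cancel against $e^{zt}\dot M T$, leaving $M\dot T = -e^{-zt} N$, equivalently $\dot T = -e^{-zt} M^{-1} N$. Integrating from $0$ to $t$ and using the initial condition $R_J(z)|_{t=0} = R_0(z)$, which (since $N(0) = I_{2\times 2}$) forces $T(0,z) = M_0^{-1} R_0(z)$, delivers the claimed expression for $T(t,z)$. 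The step I expect to be the main obstacle is spotting the correct ansatz and recognising that $\dot M + (J_{-})_{11}M = 0$ is exactly the cancellation identity that decouples the $N$ equation from the $T$ equation; once this is visible, the remainder is classical linear ODE integration.
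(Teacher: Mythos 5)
Your proof is correct and follows essentially the same route as the paper: the equivalence is reduced to Theorem~\ref{teo:7} (the paper differentiates the series $\sum J^n_{11}/z^{n+1}$ termwise via condition (b) for one direction and uses $R_J=M\mathcal{F}M^{-1}$ with condition (d) for the other, whereas you conjugate the equation of (d) for both directions — the same content), and the representation rests on the same factorization $R_J=e^{zt}MTN^{-1}$ with $M$, $N$, $T$ solving exactly the three Cauchy problems the paper writes down. The only cosmetic difference is that you derive those ODEs by substituting the ansatz and decoupling, while the paper states them and appeals to uniqueness of the Cauchy problem; both are sound.
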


\begin{proof}
The first part of the proof of this result is trivial. From theorem~\ref{teo:7} we know that if $\{a_n,b_n,c_n,d_n\}$, $n \in {\mathbb{N}}$, is a solution of~\eqref{sisteqdif} it is equivalent to say that~\eqref{eq:dif} is verified.
Starting by~\eqref{eq:dif}, to obtain the relation~\eqref{eqdif:RJ} for ${R}_J$ it is sufficient to take derivatives in~\eqref{eq:J11} and to substitute $\dot{J}_{11}^n$ in
$$
\dot{R}_J(z)= \sum_{n=0}^\infty \frac{\dot{J}^n_{11}}{z^{n+1}} \, , \ \ |z|>||J|| \, .
$$
by its equivalent condition~\eqref{eq:dif}.

Reciprocally, if~\eqref{eqdif:RJ} holds, using the fact that $R_J(z)=M{\mathcal{F}}(z)M^{-1}$ and the paragraph $(d)$ of theorem~\ref{teo:7}, we get
$\{a_n,b_n,c_n,d_n\}$, $n \in {\mathbb{N}}$, is a solution of~\eqref{sisteqdif}.

Moreover, it is easy to see that $M$, $T(t,z)$ and $N(t)$ are, respectively, the solutions of the following Cauchy problems:
$$
  \begin{cases}
    \dot{X}=-(J_{-})_{11}X \\
    X(0)=M_0 \, ,
  \end{cases}
  \begin{cases}
    \dot{X}=-e^{-zt}M^{-1} N (t) \\
    X(0)=M_0^{-1}R_0(z) \, ,
  \end{cases}
 \mbox{and} \,
  \begin{cases}
    \dot{X}=(J_{11}-(J_{-})_{11})X \\
    X(0)=I_{2 \times 2} \, .
  \end{cases}
$$

Taking derivatives in the right-hand side of~\eqref{Rexp}, and checking the initial conditions, we prove that $R_J$ is a solution of the following Cauchy problem:
\begin{eqnarray}\label{cauchyproblem}
  \begin{cases}
    \dot{X}=X(zI_{2 \times 2}-J_{11})-I_{2 \times 2}+[X,(J_{-})_{11}] \\
    X(0)=R_0(z)
  \end{cases} \, ,
\end{eqnarray}
From~\cite{Vilenkin}, we know that~\eqref{cauchyproblem} has a unique solution. On the other hand, from~\eqref{eqdif:RJ} we have that $R_J$ is a solution of~\eqref{cauchyproblem}. Then, we arrive to \eqref{Rexp}.
\end{proof}

\ifx\undefined\bysame
\newcommand{\bysame}{\leavevmode\hbox to3em{\hrulefill}\,}
\fi

\end{document}